\theoremstyle{plain}
\newtheorem{theorem}{Theorem}
\newtheorem{proposition}{Proposition}
\newtheorem{lemma}{Lemma}
\theoremstyle{definition}
\newtheorem{definition}[theorem]{Definition}
\newtheorem{example}[theorem]{Example}
\newcommand{\C}{\mathcal{C}}
\newcommand{\D}{\mathcal{D}}
\newcommand{\E}{\mathcal{E}}
\newcommand{\F}{\mathcal{F}}
\newcommand{\K}{\mathcal{K}}
\newcommand{\G}{\mathcal{G}}
\newcommand{\Aut}{\operatorname{Aut}}
\newcommand{\id}{\operatorname{id}}
\newcommand{\la}{\langle\!\langle}
\newcommand{\ra}{\rangle\!\rangle}
\newcommand{\norm}{\trianglelefteq}
\newcommand{\subn}{{\norm\norm}}
\newcommand{\<}{\langle}
\renewcommand{\>}{\rangle}
\title[Components and realizability of fusion systems]{Components and realizability of fusion systems}
\author[E.~Henke]{Ellen Henke}
\address{Institut f{\"u}r Algebra, Fakult{\"a}t Mathematik, Technische Universit{\"a}t Dresden, 01062 Dresden, Germany}
\email{ellen.henke@tu-dresden.de}
\author[J.~Lynd]{Justin Lynd}
\address{Department of Mathematics, University of Louisiana at Lafayette, Lafayette, LA 70504, USA}
\email{lynd@louisiana.edu}
\thanks{The authors would like to thank the Mathematisches Forschungsinstitut Oberwolfach for its hospitality during the workshop ``Finite groups, fusion systems and applications'', where the initial ideas for this project were formed. J.L. warmly thanks the Institute of Algebra at TU Dresden for its support of his visit in March 2025. 
E.H. was funded by the Deutsche Forschungsgemeinschaft (DFG, German Research
Foundation) – Projektnummer 511577973.}
\keywords{fusion system, component} 
\subjclass[2020]{20D05, 20E25, 20D20, 20D35}
\begin{document}

\begin{abstract}
For $p\in\{2,3\}$ it is known that a saturated $p$-fusion system is realizable if and only if each of its components is realizable by a finite simple group. For primes $p\geq 5$ this is false. Building on work of Broto, M{\o}ller, Oliver and Ruiz, we show however that a fusion system $\F$ is realizable if and only if for each of its components $\C$ there exists a realizable subnormal subsystem $\E$ of $\F$ with $O^{p^\prime}(\E)=\C$.  
\end{abstract}

\maketitle

\section{Introduction}

The theory of saturated fusion systems generalizes important aspects of finite group theory as each finite group $G$ leads to a fusion system $\F_S(G)$ which encodes the conjugacy relations between subgroups of a fixed Sylow $p$-subgroup $S$. It is a major objective in the field to understand under which conditions a fusion system is realizable by a finite group and under which conditions it is not (thereby called exotic). Building on the classification of finite simple groups, a recent fundamental result of Broto, M{\o}ller, Oliver and Ruiz \cite[Corollary~B]{BrotoMollerOliverRuiz2023} shows that a fusion system is realizable by a finite group if each of its components is realizable. For $p=2$, the converse was known before by a Lemma of Aschbacher (cf. \cite[Lemma~2.54]{HenkeLynd2022}), again assuming the classification of finite simple groups. As we explain below, the converse is also true at the prime $3$. So when $p \in\{2,3\}$, a fusion system is realizable if and only if each of its components is realizable. For primes $p\geq 5$ this is false, as there are examples of finite simple groups $G$ for which $O^{p^\prime}(\F_S(G))$ is an exotic quasisimple fusion system. 
A comprehensive list of such finite simple groups is given in \cite[Section~4]{OliverRuiz2021}. It was first shown by Ruiz \cite{Ruiz2007} that this phenomenon occurs for certain projective special linear groups for a careful choice of the field and the prime $p$. 

\smallskip

Thus, even if a saturated fusion system $\F=\F_S(G)$ is realizable by a finite group $G$ with Sylow $p$-subgroup $S$, it is not necessarily the case that every component is realizable. Nevertheless, if we choose $G$ to be $p'$-reduced, there is a strong connection between the components of $\F$ and the components of $G$. Namely, for each component $\C$ of $\F$ there exists a component $K$ of $G$ such that $\C=O^{p^\prime}(\F_{S\cap K}(K))$. This is shown in \cite[Proposition~4.9]{BrotoMollerOliverRuiz2023}, but we also restate this result as Theorem~\ref{T:Realizable} below and supply an alternative proof. For $p=3$, it follows from \cite[Theorem~4.5]{BrotoMollerOliverRuiz2023} that $\C=O^{p^\prime}(\F_{S\cap K}(K))$ is always realizable by a quasisimple group. Thus, as remarked above, every component of a realizable $3$-fusion system is indeed realizable. 

\smallskip

For primes $p\geq 5$, the situation is a bit more subtle. In general, if $K$ is subnormal in a finite group $G$ with Sylow $p$-subgroup $S$, then $\F_{S\cap K}(K)$ is subnormal in $\F=\F_S(G)$. So if $\F$ is realizable, then the result stated above yields that every component $\C$ of $\F$ equals $O^{p^\prime}(\E)$ for some subnormal realizable subsystem $\E$ of $\F$. The main objective of this note is to point out that the converse holds. We prove the following theorem.

\begin{theorem}\label{main}
 Let $\F$ be a saturated fusion system and write $\C_1,\dots,\C_k$ for the components of $\F$. Then $\F$ is realizable if and only if for each $i=1,\dots,r$, there exists a realizable subnormal subsystem $\E_i$ of $\F$ such that $\C_i=O^{p^\prime}(\E_i)$.
\end{theorem}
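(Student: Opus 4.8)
\emph{The forward implication} is essentially the discussion preceding the statement. If $\F=\F_S(G)$, then, replacing $G$ by $G/O_{p'}(G)$ (which changes neither $S$ nor $\F_S(G)$), we may take $G$ to be $p'$-reduced. By Theorem~\ref{T:Realizable} there is, for each $i$, a component $K_i$ of $G$ with $\C_i=O^{p'}(\F_{S\cap K_i}(K_i))$; as $K_i$ is subnormal in $G$, the subsystem $\E_i:=\F_{S\cap K_i}(K_i)$ is subnormal in $\F$, is realized by the quasisimple group $K_i$, and satisfies $O^{p'}(\E_i)=\C_i$. (So the $\E_i$, when they exist, may always be chosen to be realized by quasisimple groups.)

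\emph{For the converse,} suppose that for each $i$ there is a realizable subnormal subsystem $\E_i$ of $\F$ with $O^{p'}(\E_i)=\C_i$. Since $\C_i=O^{p'}(\E_i)$ is normal in $\E_i$ and quasisimple, it is the unique component of $\E_i$ and has the same Sylow $p$-subgroup, so $\C_i\le\E_i\le\F$. The plan is to assemble from the $\E_i$ a single realizable \emph{normal} subsystem $\E\trianglelefteq\F$ containing the generalized Fitting subsystem $F^*(\F)=O_p(\F)E(\F)$, and then to appeal to the argument behind \cite[Corollary~B]{BrotoMollerOliverRuiz2023}: that argument shows (via the classification of finite simple groups) that a realizable normal subsystem $\E$ with $C_\F(\E)\le\E$ — which holds here, as $F^*(\F)\le\E$ forces $C_\F(\E)\le C_\F(F^*(\F))\le F^*(\F)\le\E$ — can be promoted to a realization of all of $\F$. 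Note that $\F/\E$ has no components, since every component of $\F$ lies in $E(\F)\le\E$; hence $\F/\E$ is constrained and so realizable, and the real content is in realizing $\E$ itself together with the gluing.

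\emph{To assemble $\E$,} replace each $\E_i$ by its $\F$-conjugates (a conjugate of a component being again a component, hence one of the $\C_j$), let $\E_0$ be the subsystem generated by all of these, and put $\E:=O_p(\F)\E_0$, the product of $\E_0$ with the normal subsystem $\F_{O_p(\F)}(O_p(\F))$. Then $\E$ is normal in $\F$, and $F^*(\F)\le\E$ because $E(\F)=\C_1*\cdots*\C_k\le\E_0$. For realizability of $\E$ one should check that the relevant $\F$-conjugates of the $\E_i$ commute pairwise in $\F$ — plausibly because distinct components commute and the identity $\C_i=O^{p'}(\E_i)$ keeps $\E_i$ close to $\C_i$ — so that $\E_0$ is a central product of realizable subsystems and hence realized by a central product of quasisimple groups, and that adjoining $O_p(\F)$ then preserves realizability.

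\emph{The main obstacle} is exactly this production of a \emph{realizable} normal subsystem $\E$ with $F^*(\F)\le\E$: the commutation of the relevant conjugates of the $\E_i$ and the realizable adjunction of $O_p(\F)$, together with checking that the promotion step of \cite{BrotoMollerOliverRuiz2023} survives enlarging $E(\F)$ to $\E$. This is where one must use the hypothesis in the strong form ``$\C_i=O^{p'}(\E_i)$ with $\E_i$ \emph{realizable}'', rather than ``$\C_i$ realizable'': the obvious candidate $\E=O^{p'}(\F)$, the smallest normal subsystem on $S$ that records each component together with its $p'$-residual structure, can itself be exotic even when $\F$ is realizable — precisely the odd-$p$ phenomenon recalled in the introduction — so the given realizable thickenings $\E_i$ are genuinely needed.
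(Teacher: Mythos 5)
Your forward direction matches the paper's almost exactly. For the converse your high-level plan — realize a central product of the $\E_i$ and feed it into a BMOR-type result — is the right idea, but the proposal is a sketch with its two essential steps left as acknowledged gaps, and it also overshoots the target in a way that creates unnecessary work.

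First, you aim to build a \emph{normal} subsystem $\E$ with $F^*(\F)\le\E$ and then invoke the argument behind \cite[Corollary~B]{BrotoMollerOliverRuiz2023}. This is stronger than needed and forces you to take $\F$-conjugates of the $\E_i$ and to adjoin $O_p(\F)$, neither of which you can justify. The paper instead proves Theorem~\ref{T:BMORSubnormal}: a realizable \emph{subnormal} subsystem containing all components already forces $\F$ to be realizable (an easy induction along a subnormal series using \cite[Theorem~A]{BrotoMollerOliverRuiz2023}). With that in hand one can take $\D=\E_1*\cdots*\E_k$ directly — no conjugates, no $O_p(\F)$, no appeal to the machinery of Corollary~B.

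Second, and more seriously, the two steps you flag as ``plausible'' or ``to be checked'' are exactly the mathematical content. (a) That $\E_1,\dots,\E_k$ commute and that their central product is again \emph{subnormal} in $\F$ is Lemma~\ref{L:CentralProdSubnormal}; its proof is not a routine consequence of ``distinct components commute'' but uses \cite[Theorem~5.4]{HenkeWielandt} to produce the smallest subnormal subsystem in which all $\E_i$ are subnormal, together with a careful argument (property \eqref{E:i=j}) that the $\C_j$ are distributed among the $\E_i$ in the right way. (b) You do not address at all why the central product $\D$ is realizable. Realizing each $\E_i$ by $G_i$ and forming $G_1\times\cdots\times G_k$ gives the direct product $\E_1\times\cdots\times\E_k$, but the map onto $\D$ has kernel $Z\le Z_1\times\cdots\times Z_k$, and $Z$ need not be central in $G$. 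The paper's fix is to replace $G_i$ by $C_{G_i}(Z_i)$ — legitimate because $\E_i=\F_{S_i}(C_{G_i}(Z_i))$ by \cite[Proposition~I.5.4]{AschbacherKessarOliver2011} — so that $Z\le Z(G)$ and $\D\cong\E/Z$ is realized by $G/Z$. Without this step the realizability of $\D$ is unproved. In short, your outline is consistent with the truth but omits the two lemmas that actually carry the proof, and the normal/$F^*$ route you chose makes the missing work harder rather than easier.
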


Our proof builds fundamentally on \cite[Theorem~A]{BrotoMollerOliverRuiz2023} which states that a fusion system $\F$ is realizable if it has a realizable normal subsystem $\E$ which contains all components of $\F$. It should be pointed out that both \cite[Theorem~A]{BrotoMollerOliverRuiz2023} and our Theorem~\ref{main} depend on the classification of finite simple groups.

\smallskip

Under the hypothesis of Theorem~\ref{main}, one might ask whether $\F$ is realizable if, for each $i=1,2,\dots,r$, there exists a realizable subsystem $\E_i$ of $\F$ with $\C_i=O^{p^\prime}(\E_i)$. In Section~\ref{S:Example}, we construct a family of examples to show that this is not the case in general. Thus, the hypothesis that the subsystems $\E_i$ are subnormal is actually important to conclude that $\F$ is realizable.

\smallskip

The following proposition, which is restated and proved in Proposition~\ref{P:uniqueE}, seems also interesting in the context of Theorem~\ref{main}, as it applies in particular to every component $\C$ of a saturated fusion system $\F$. The proof relies again on \cite[Theorem~A]{BrotoMollerOliverRuiz2023} and thus on the classification of finite simple groups.

\begin{proposition}\label{P:main}
Let $\C$ be a subnormal subsystem of $\F$ on $T$ with $\C=O^{p^\prime}(\C)$. Then there is a unique largest (with respect to inclusion) subnormal subsystem $\E$ of $\F$ such that $O^{p'}(\E) = \C$. If there exists a  realizable subnormal subsystem $\E'$ with $O^{p'}(\E') = \C$, then $\E$ is also realizable. 
\end{proposition}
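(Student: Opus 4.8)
\emph{Constructing the largest $\E$.} The plan is to let $\mathcal S$ be the set of subnormal subsystems $\E$ of $\F$ with $O^{p'}(\E)=\C$; it is nonempty, since $\C\in\mathcal S$ by hypothesis. Each $\E\in\mathcal S$ is a fusion system on $T$ (because $O^{p'}(\E)$ is a subsystem of $\E$ on the Sylow subgroup of $\E$) and has $\C=O^{p'}(\E)$ as a normal subsystem with ``$p'$-quotient'' over $\C$. I would let $\E$ be the subsystem of $\F$ generated by all members of $\mathcal S$ and show $\E\in\mathcal S$; it then contains every member of $\mathcal S$, so it is the unique largest one. That $\E$ is subnormal in $\F$ I would deduce from the analogue of Wielandt's join theorem for subnormal subsystems (using that $\F$ has finitely many subsystems, so $\E$ is a join of finitely many subnormal subsystems). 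For $O^{p'}(\E)=\C$, the inclusion $\C\le O^{p'}(\E)$ is immediate from $\C=O^{p'}(\C)$; for the reverse I would pass to a subsystem of $\F$ in which $\C$ is normal and which contains all members of $\mathcal S$, where the subsystems on $T$ lying over $\C$ with $p'$-quotient are governed by $p'$-subgroups of a finite group $\Gamma$ assembled from $\Aut_\F(T)$ and $\Aut_\C(T)$, the subnormal ones corresponding to subnormal $p'$-subgroups of $\Gamma$. Since subnormal $p'$-subgroups of a finite group all lie in $O_{p'}(\Gamma)$, they generate a $p'$-subgroup, and translating back gives $O^{p'}(\E)=\C$.

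\emph{Realizability.} Suppose now that some $\E'\in\mathcal S$ is realizable. Since $\E$ is largest, $\E'\le\E$, and since $\E'$ is subnormal in $\F$ and contained in $\E$, it is subnormal in $\E$; fix a chain $\E'=\D_0\norm\D_1\norm\cdots\norm\D_m=\E$. Each $\D_i$ is subnormal in $\F$ (hence saturated), is a fusion system on $T$, and satisfies $O^{p'}(\D_i)=\C$: indeed $\C\le\D_i\le\E$ with $\C\norm\D_i$, and the $p'$-quotient of $\E$ over $\C$ restricts to that of $\D_i$ over $\C$. Consequently every component of $\D_{i+1}$ is a component of $O^{p'}(\D_{i+1})=\C$, hence is contained in $\C\le\D_i$. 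Thus $\D_i$ is a normal subsystem of $\D_{i+1}$ containing every component of $\D_{i+1}$, so by \cite[Theorem~A]{BrotoMollerOliverRuiz2023}, if $\D_i$ is realizable then so is $\D_{i+1}$. As $\D_0=\E'$ is realizable, induction on $i$ shows $\E=\D_m$ is realizable.

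\emph{Main obstacle.} The substantive step is the first part, and in particular verifying $O^{p'}(\E)=\C$ for the join, i.e.\ that forming this join does not enlarge the $p'$-part beyond $\C$; this is exactly where one invokes that subnormal $p'$-subgroups of a finite group lie in $O_{p'}$, together with a correspondence between subsystems over $\C$ with $p'$-quotient and $p'$-subgroups of a suitable automorphism group. Once $\E$ is in hand, the realizability assertion is a short induction along a subnormal chain, powered by \cite[Theorem~A]{BrotoMollerOliverRuiz2023} and the fact that every component in sight already lies in $\C$.
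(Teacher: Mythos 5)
Your overall plan matches the paper's: construct $\E$ as the Wielandt-type join of $\mathcal S$ using the subnormal join theorem (the paper cites \cite[Theorem~5.4(a),(b)]{HenkeWielandt}), show $\E\in\mathcal S$, and deduce realizability along a subnormal chain. For the realizability part your argument is essentially the content of Theorem~\ref{T:BMORSubnormal} re-derived inline; the paper shortcuts this by noting that $\C=O^{p'}(\E)\subseteq\E'$ forces $\E'$ to contain all components of $\E$ (via \cite[Theorem~7.10(d)]{ChermakHenke2022}) and then applying Theorem~\ref{T:BMORSubnormal} directly. Both routes work; yours is a bit longer but sound.

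The genuine divergence — and the weak point — is your step showing $O^{p'}(\E)=\C$. You propose a correspondence between subnormal subsystems on $T$ with $p'$-quotient over $\C$ and subnormal $p'$-subgroups of a finite group $\Gamma$ built from $\Aut_\F(T)$ and $\Aut_\C(T)$, and then invoke that subnormal $p'$-subgroups lie in $O_{p'}(\Gamma)$. No such global correspondence is available in the generality you need: the standard dictionary between $p'$-index subsystems and subgroups of $\Aut_\G(T)/\Aut_{O^{p'}(\G)}(T)$ applies within a \emph{fixed} ambient system $\G$ in which $O^{p'}(\G)$ is already identified, and it classifies subsystems of $p'$-index in $\G$, not arbitrary subnormal subsystems of $\F$. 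You would first need to know $\C\norm\E$ and $\E\in\mathcal S$ to even set up $\Gamma$, which is circular, and subnormality in $\F$ does not translate to subnormality in any single $\Gamma$. The paper avoids all of this with a much more direct argument: take a subnormal chain $\C=\C_0\norm\C_1\norm\cdots\norm\C_n=\E$ (all on $T$), and apply \cite[Theorem~7.56]{CravenTheory}, which says $O^{p'}(\C_i)\subseteq\C_{i-1}$ whenever $\C_{i-1}\norm\C_i$ are on the same $p$-group; iterating gives $O^{p'}(\E)=O^{p'}(\C)=\C$ with no auxiliary group $\Gamma$ needed. I would replace your correspondence sketch with this chain argument; as written, that step is a gap.
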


\section{Proofs}

Throughout let $\F$ be a saturated fusion system over a finite $p$-group $S$. The reader is referred to Sections I.1-I.7 and Section~II.5 in \cite{AschbacherKessarOliver2011} for an introduction to the theory of fusion systems. We use also the following definition.

\begin{definition}
Let $\F_1,\dots,\F_k$ be saturated subsystems of $\F$ on $S_1,\dots,S_k$ respectively. Then the subsystems $\F_1,\dots,\F_k$ \emph{commute} (in $\F$) if $\F_i\subseteq C_\F(\prod_{j\neq i}S_j)$ for all $i=1,\dots,k$. 
\end{definition}

If saturated subsystems $\F_1,\dots,\F_k$  on $S_1,\dots,S_k$ commute, then it is shown in \cite[Lemma~2.21]{ChermakHenke2022} that $S_i\,\cap \,\prod_{j\neq i}S_j\leq Z(\F_i)$ for $i=1,2,\dots,k$ (a property which is not necessarily true if one drops the assumption that $\F_1,\dots,\F_k$ are saturated) and so $\F_1,\dots,\F_k$ centralize each other in the sense of \cite[Definition~2.20]{ChermakHenke2022}. The following lemma is then a consequence of Lemmas~2.16(a), 2.19(c) and 2.22 in \cite{ChermakHenke2022}.

\begin{lemma}\label{L:CentralProduct}
Let $\F_1,\dots,\F_k$ be saturated subsystems of $\F$ which commute in $\F$. Then $\F$ contains a unique saturated subsystem $\F_1*\F_2*\cdots *\F_k$ which is a central product of $\F_1,\dots,\F_k$.
\end{lemma}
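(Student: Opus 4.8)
The plan is to deduce the lemma from the machinery of centralizing subsystems developed in \cite{ChermakHenke2022}, by an induction on $k$ that assembles the central product two factors at a time.

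First I would record that the hypothesis is more than enough to invoke that machinery: since $\F_1,\dots,\F_k$ commute in $\F$, the cited \cite[Lemma~2.21]{ChermakHenke2022} gives $S_i\cap\prod_{j\neq i}S_j\leq Z(\F_i)$ for all $i$, so that $\F_1,\dots,\F_k$ centralize one another in the sense of \cite[Definition~2.20]{ChermakHenke2022}. The same applies to any sub-collection of the $\F_i$, and in particular any two of them centralize each other.

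Next, the induction. The cases $k\leq 1$ are vacuous, and the case $k=2$ is precisely \cite[Lemma~2.22]{ChermakHenke2022}, which yields inside $\F$ a unique saturated subsystem that is a central product of $\F_1$ and $\F_2$. For $k\geq 3$, I would assume inductively that $\D:=\F_1*\F_2*\cdots*\F_{k-1}$ has been constructed as a unique saturated subsystem of $\F$ which is a central product of $\F_1,\dots,\F_{k-1}$, defined over $S_1S_2\cdots S_{k-1}$ by \cite[Lemma~2.16(a)]{ChermakHenke2022}. Using \cite[Lemma~2.19(c)]{ChermakHenke2022} one checks that the centralizing relation is inherited by central products, so that $\D$ and $\F_k$ centralize each other in $\F$; then \cite[Lemma~2.22]{ChermakHenke2022}, applied to $\D$ and $\F_k$, produces a unique saturated subsystem $\D*\F_k\leq\F$ which is a central product of $\D$ and $\F_k$, and therefore of $\F_1,\dots,\F_k$. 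I would set $\F_1*\F_2*\cdots*\F_k:=\D*\F_k$.

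It then remains to verify that the outcome is independent of the order in which the factors are combined, so that the notation $\F_1*\F_2*\cdots*\F_k$ is unambiguous, and that the resulting subsystem is the unique saturated subsystem of $\F$ with the claimed central product structure; both follow from the associativity-type content of \cite[Lemmas~2.16(a) and~2.19(c)]{ChermakHenke2022} together with the uniqueness already established in the two-factor case. The step I expect to be the main obstacle -- and the one genuinely being imported from \cite{ChermakHenke2022} -- is this inductive step: checking that forming the central product $\F_1*\F_2*\cdots*\F_{k-1}$ does not destroy the centralizing relationship with $\F_k$, and that the underlying $p$-groups combine appropriately for \cite[Lemma~2.22]{ChermakHenke2022} to apply.
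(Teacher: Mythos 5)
The paper provides no explicit argument for this lemma; the surrounding text simply observes that commuting saturated subsystems centralize each other (via \cite[Lemma~2.21]{ChermakHenke2022}) and then declares the lemma a consequence of \cite[Lemmas~2.16(a), 2.19(c), 2.22]{ChermakHenke2022}. Your proposal draws on exactly the same toolkit, so in substance you are following the paper's intended route. One caveat, though: the relevant machinery in \cite{ChermakHenke2022} (Definition~2.20 and Lemma~2.22) is formulated for an arbitrary finite family of subsystems that centralize each other, so the paper almost certainly intends a single application of Lemma~2.22 to the whole collection $\F_1,\dots,\F_k$ rather than the two-at-a-time induction you set up. Your inductive route is not wrong in principle, but it forces you to establish an associativity-type statement --- that $\F_1*\cdots*\F_{k-1}$ and $\F_k$ still centralize each other --- which you correctly flag as the delicate step and which the direct application avoids entirely. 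Before building the induction, it is worth checking the exact statement of \cite[Lemma~2.22]{ChermakHenke2022}; if it already covers the $k$-ary case, the proof collapses to one invocation, with Lemmas~2.16(a) and~2.19(c) only needed to identify the Sylow group of the central product and to match the internal notion of central product used there with Aschbacher's.
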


We use here the definition of an (internal) central product as given in \cite[Notation~2.25, Definition~2.20]{ChermakHenke2022}), but this is by \cite[Lemma~2.19(a)]{ChermakHenke2022} equivalent to Aschbacher's notion of a  central product, which defines a central product as a quotient of the direct product of fusion systems modulo a suitable central subgroup (see Definition~2.8 and the following discussion on p.14 in \cite{AschbacherGeneralized}). 

\smallskip

The fusion system $\F$ is called \emph{simple} if it has no non-trivial proper normal subsystems. Following Aschbacher \cite[Chapter~9]{AschbacherGeneralized} we define $\F$ to be \emph{quasisimple} if $\F=O^p(\F)$ and $\F/Z(\F)$ is simple. Moreover, a \emph{component} of $\F$ is a quasisimple subnormal subsystem. We will frequently use the following lemma.

\begin{lemma}\label{L:ComponentsBasic}
Let $\E$ be a subnormal subsystem of $\F$ and let $\C_1,\dots,\C_k$ be pairwise distinct components of $\F$, which are not components of $\E$. Then $\E,\C_1,\dots,\C_k$ commute. In particular, any two distinct components of $\F$ commute.
\end{lemma}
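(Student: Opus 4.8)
The plan is to reduce the general statement to the known fact that two distinct components of $\F$ commute, and to an analogous fact comparing a subnormal subsystem with a component not inside it. First I would recall the two ingredients from the theory of components in fusion systems (developed by Aschbacher in \cite{AschbacherGeneralized}, with the relevant ``central product of components'' machinery also available through \cite{ChermakHenke2022}): (i) if $\C$ and $\D$ are distinct components of $\F$, then $\C\subseteq C_\F(\D)$ equivalently $\C$ and $\D$ commute; and (ii) if $\E\norm\norm\F$ and $\C$ is a component of $\F$ with $\C$ not a component of $\E$, then $\C\subseteq C_\F(\E)$ — this is the ``$E(\F)$ normalizes $\E$, and a component outside $\E$ centralizes $\E$'' phenomenon, proved by the usual argument that $[\C,\E]$ is subnormal in the (quasisimple) $\C$ hence is either trivial or all of $\C$, and the latter is impossible since it would force $\C\le\E$ by subnormality considerations. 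Granting (i) and (ii), the pairwise commuting relations among $\E,\C_1,\dots,\C_k$ all hold.

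The remaining point — and the only genuine content beyond citing (i) and (ii) — is to upgrade these pairwise relations to the statement that the whole family $\E,\C_1,\dots,\C_k$ commutes, i.e.\ that each member centralizes the product of the Sylow subgroups of all the others. Write $S_0, S_1, \dots, S_k$ for the Sylow subgroups of $\E, \C_1, \dots, \C_k$. Pairwise commuting gives, for each $i$, that $\C_i \subseteq C_\F(S_j)$ for every $j \neq i$ (and similarly with $\E$ in place of some $\C_i$). I would then invoke the standard fact that for a saturated subsystem $\D$ of $\F$ on $T$ and subgroups $U, V \le S$, one has $\D \subseteq C_\F(U)$ and $\D \subseteq C_\F(V)$ implies $\D \subseteq C_\F(UV)$ whenever $U$ and $V$ normalize each other — which holds here because distinct components, and a component together with a subnormal subsystem not containing it, have Sylow subgroups that centralize (hence normalize) one another, so the relevant products are well-behaved. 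Iterating over the finitely many indices yields $\C_i \subseteq C_\F(\prod_{j \neq i} S_j)$ and likewise for $\E$, which is exactly the definition of commuting.

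The main obstacle I anticipate is the bookkeeping in the last step: one must check that $C_\F(U) \cap C_\F(V) \subseteq C_\F(UV)$ at the level of fusion systems (not just that the relevant groups centralize $UV$), and that the various Sylow subgroups $S_0, S_1, \dots, S_k$ pairwise centralize one another so that their product is a $p$-subgroup on which the centralizer subsystem is defined and saturated. Both of these should follow from \cite[Lemma~2.21]{ChermakHenke2022} applied inductively (or directly from the fact, cited in the paragraph before the lemma, that commuting saturated subsystems have Sylow subgroups intersecting in the center), but making the induction clean is where the care is needed. The final sentence of the lemma is then the special case $k=2$, $\E = \C_1$ a component, $\C_1$ (playing the role of the single $\C$) distinct from $\E$ — or more simply just ingredient (i) directly.
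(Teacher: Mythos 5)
The paper's own ``proof'' is a one-line citation to \cite[Lemma~7.15(b)]{ChermakHenke2022}, so you are attempting substantially more than the authors do. Your outline is plausible, but it has two genuine gaps that the citation sidesteps entirely.

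First, your ingredient~(ii) imports the group-theoretic commutator argument (``$[\C,\E]$ is subnormal in the quasisimple $\C$, hence trivial or all of $\C$'') wholesale. But the commutator of two subsystems is not a standard construction in the category of fusion systems; even where versions of it exist (in Aschbacher's and Chermak--Henke's work), proving that $[\C,\E]$ is a subnormal subsystem of $\C$, and that the three-subgroups-style argument goes through, is precisely the technical content of the results you would need to cite. Asserting it as ``the usual argument'' elides the hard part.

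Second, and more seriously, the upgrade from pairwise commuting to the full commuting-family condition is not a formal consequence, and the ``standard fact'' you invoke — that $\D\subseteq C_\F(U)$ and $\D\subseteq C_\F(V)$ with $U,V$ normalizing each other implies $\D\subseteq C_\F(UV)$ — is not obviously true. Membership of a morphism $\varphi\colon P\to Q$ in $C_\F(U)$ is an existence statement: some extension $\tilde\varphi\colon PU\to QU$ in $\F$ fixes $U$ pointwise. Knowing separately that $\varphi$ extends over $U$ fixing $U$, and extends over $V$ fixing $V$, does not produce an extension over $UV$ fixing $UV$; those two extensions need not glue. Your citation of \cite[Lemma~2.21]{ChermakHenke2022} is circular here, since that lemma takes the commuting hypothesis as input rather than establishing it. This gluing problem is exactly why the commuting-family statement for components is established with a dedicated argument in \cite[Chapter~9]{AschbacherGeneralized} and \cite[Lemma~7.15]{ChermakHenke2022}, rather than being reduced to pairwise relations. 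As written, your proposal reduces the lemma to a claim at least as hard as the lemma itself.
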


\begin{proof}
This is stated explicitly in \cite[Lemma~7.15(b)]{ChermakHenke2022} (but similar results are implicitly contained in \cite[Chapter~9]{AschbacherGeneralized}).
\end{proof}

\begin{theorem}\label{fuscomp}
Let $G$ be a finite group, $S$ a Sylow $p$-subgroup of $G$, and $\F = \F_S(G)$. 
For each component $K$ of $G$, one of the following holds for $\K = \F_{S \cap K}(K)$. 
\begin{enumerate}
\item $\K$ is constrained,
\item $O^{p'}(\K)$ is quasisimple and so a component of $\F$. 
\end{enumerate}
\end{theorem}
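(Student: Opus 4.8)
The plan is to peel off the $p'$-part of the center, reduce the statement to one about the fusion system of the simple quotient $L := K/O_{p'}(K)Z(K)$, and then appeal to the structure theory of the fusion systems of the finite simple groups.

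\emph{Reductions.} Since $K$ is subnormal in $G$, $S\cap K$ is a Sylow $p$-subgroup of $K$, and $\K:=\F_{S\cap K}(K)$ is a saturated subnormal subsystem of $\F$. The solvable radical of the quasisimple group $K$ is $Z(K)$, so $O_{p'}(K)$ is a central $p'$-subgroup; as $K/O_{p'}(K)$ is again quasisimple with trivial $p'$-core and the projection induces an isomorphism $\K\cong\F_{(S\cap K)O_{p'}(K)/O_{p'}(K)}(K/O_{p'}(K))$, I would assume $O_{p'}(K)=1$. Then $Z:=Z(K)$ is a $p$-group, $L:=K/Z$ is a nonabelian simple group, $\bar S:=(S\cap K)/Z$ is a Sylow $p$-subgroup of $L$, and $\K/Z\cong\F_{\bar S}(L)$.

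\emph{The degenerate case, and the reduction to $L$.} If $p\nmid|L|$, then $\bar S=1$, so $S\cap K=Z\le Z(K)$ and $\K$ is the nilpotent fusion system on a $p$-group, which is constrained; then (1) holds. So assume $p\mid|L|$. Since $O^{p'}(K)$ contains a Sylow $p$-subgroup of $K$, the subgroup $O^{p'}(K)Z(K)/Z(K)\trianglelefteq L$ is nontrivial, hence equals $L$; thus $K/O^{p'}(K)$ is a $p'$-quotient of the $p$-group $Z(K)$ and therefore trivial, i.e.\ $O^{p'}(K)=K$. Also $O^p(K)=K$, as $K$ is perfect. By the hyperfocal subgroup theorem and the corresponding description of $O^{p'}$ for realizable fusion systems, $O^{p'}(\K)=\K=O^p(\K)$, and likewise $O^{p'}(\F_{\bar S}(L))=\F_{\bar S}(L)=O^p(\F_{\bar S}(L))$. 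It now suffices to show that $\K$ is constrained or quasisimple: in the first case (1) holds, and in the second $O^{p'}(\K)=\K$ is a quasisimple subnormal subsystem of $\F$, i.e.\ a component of $\F$, so (2) holds. As $Z\le Z(\K)$ is a $p$-group with $\K/Z\cong\F_{\bar S}(L)$ and $\K=O^p(\K)$, one checks that $\K$ is constrained, respectively quasisimple, as soon as $\F_{\bar S}(L)$ is. Thus everything comes down to the claim: \emph{if $L$ is a nonabelian simple group with $p\mid|L|$ and Sylow $p$-subgroup $\bar S$, then $\F_{\bar S}(L)$ is constrained or quasisimple.}

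\emph{The core and the main obstacle.} To prove this claim I would assume $\F_{\bar S}(L)$ is not constrained and show it is quasisimple. Then $F^*(\F_{\bar S}(L))\ne O_p(\F_{\bar S}(L))$, so $\F_{\bar S}(L)$ has a component $\D$, over a nontrivial strongly closed subgroup $T\le\bar S$; since $\F_{\bar S}(L)=O^p(\F_{\bar S}(L))$, the system $\F_{\bar S}(L)$ is quasisimple with $\D$ its unique component as soon as it has no proper nontrivial normal subsystem. So suppose for contradiction that $\E$ is such a subsystem; because $\F_{\bar S}(L)=O^p(\F_{\bar S}(L))=O^{p'}(\F_{\bar S}(L))$, the index $[\F_{\bar S}(L):\E]$ is divisible both by $p$ and by another prime, so $\E$ cannot be reached from $\F_{\bar S}(L)$ by the ``index'' operators alone. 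I would then transfer $\E$, together with the component $\D$, back into the group $L$ and extract a proper nontrivial normal subgroup of $L$, contradicting simplicity. This last step I expect to be the main obstacle, and it is genuinely not formal: for non-simple groups a normal subsystem of $\F_S(G)$ need not be induced by a normal subgroup of $G$, and even for simple groups the correspondence fails once $\F_{\bar S}(L)$ is constrained (for instance $\F_{C_p}(\mathrm{PSL}_2(p))$ has proper nontrivial normal subsystems although $\mathrm{PSL}_2(p)$ is simple). Hence both the non-constrainedness hypothesis and detailed knowledge of the fusion systems of the finite simple groups (in the spirit of \cite[Section~4]{OliverRuiz2021}; cf.\ also the arguments in \cite{BrotoMollerOliverRuiz2023}) must enter. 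Granting the claim, the theorem follows as indicated above.
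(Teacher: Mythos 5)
The proposal contains a genuine error that makes the main reduction invalid, quite apart from the (acknowledged) gap at the end.

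You assert that ``by the hyperfocal subgroup theorem and the corresponding description of $O^{p'}$ for realizable fusion systems, $O^{p'}(\K)=\K=O^p(\K)$,'' deducing this from $O^{p'}(K)=K$. The second equality $O^p(\K)=\K$ is indeed what the hyperfocal subgroup theorem gives from $O^p(K)=K$. But there is \emph{no} analogous correspondence for $O^{p'}$: in general $O^{p'}(\F_S(G))\ne\F_S(O^{p'}(G))$, and the failure of this equality for quasisimple (indeed simple) groups is precisely the phenomenon motivating the entire paper. In the examples of Ruiz discussed in the introduction, $G$ is simple (so $O^{p'}(G)=G$) yet $O^{p'}(\F_S(G))$ is a \emph{proper} normal subsystem of $\F_S(G)$. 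Since a quasisimple fusion system always satisfies $O^{p'}(\F)=\F$ (the image of $O^{p'}(\F)$ in the simple quotient $\F/Z(\F)$ is a normal subsystem over the full Sylow subgroup, hence equals $\F/Z(\F)$), such an $\F_S(G)$ is not quasisimple; it is also not constrained. Consequently the claim you reduce to --- that for a nonabelian simple $L$ with $p\mid|L|$, the system $\F_{\bar S}(L)$ is constrained or quasisimple --- is false, and the intended reduction collapses. The correct target claim, which is exactly what \cite[Theorem~4.5]{BrotoMollerOliverRuiz2023} proves and what the paper's one-line proof cites, is the weaker statement that $\F_{\bar S}(L)$ is constrained or $O^{p'}(\F_{\bar S}(L))$ is quasisimple.

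Even setting that aside, you explicitly acknowledge that the core claim is left unproven and ``must enter'' via detailed CFSG-based knowledge of the fusion systems of simple groups. The paper does not attempt to reprove this from scratch; it simply cites \cite[Theorem~4.5]{BrotoMollerOliverRuiz2023}. So the appropriate move here is to cite that result directly rather than to try to reduce further, and in any case the reduction as carried out contains the false step above.
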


\begin{proof}
This follows from \cite[Theorem~4.5]{BrotoMollerOliverRuiz2023}.
\end{proof}

The next theorem is a direct consequence of \cite[Proposition~4.9]{BrotoMollerOliverRuiz2023}, but we supply an alternative proof here which follows essentially the proof of \cite[Lemma~2.54]{HenkeLynd2022} and uses the $Z_p^*$-Theorem through \cite{HenkeSemeraro2015}. In the formulation of the proof we use that, by \cite[(6.7.1)]{AschbacherGeneralized} or \cite[Theorem~1(a)]{Henke2018}, for every normal subsystem $\E$ of $\F$, there is a subgroup $C_S(\E)$ of $S$ that is with respect to inclusion the unique maximal element of the set $\{R\leq S\colon \E\subseteq C_\F(R)\}$. Our proof depends on the classification of finite simple groups through the $Z_p^*$-Theorem for odd $p$ and through Theorem~\ref{fuscomp}

\begin{theorem}\label{T:Realizable}
Let $G$ be a finite group with $O_{p'}(G) = 1$. 
Fix a Sylow $p$-subgroup $S$ of $G$ and set $\F = \F_S(G)$. 
For each component $\C$ of $\F$, there is a unique component $K$ of $G$ such that $O^{p'}(\F_{S \cap K}(K)) = \C$. 
\label{comp fus}
\end{theorem}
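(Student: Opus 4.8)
The plan is to prove existence and uniqueness separately, leaning on Theorem~\ref{fuscomp} for the connection between components of $G$ and components of $\F$, and on the $Z_p^*$-theorem (via \cite{HenkeSemeraro2015}) to rule out pathological behavior.

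For \textbf{existence}, let $\C$ be a component of $\F$, say on $T \leq S$. The components of $G$ are $K_1, \dots, K_n$; since $O_{p'}(G) = 1$, the product $E(G) = K_1 \cdots K_n$ is a central product of the $K_i$, and $S \cap E(G)$ is a Sylow $p$-subgroup of $E(G)$ which is the (central) product of the $S \cap K_i$. By Theorem~\ref{fuscomp}, each $\K_i := \F_{S \cap K_i}(K_i)$ is either constrained or has $O^{p'}(\K_i)$ a component of $\F$. The fusion system $\F_{S \cap E(G)}(E(G))$ is the central product of the $\K_i$, and its subnormal subsystem $\F_{S\cap K_i}(K_i) = \K_i$ is normal in it. One knows $\F_{S\cap E(G)}(E(G)) \trianglelefteq \F$ (as $E(G) \trianglelefteq G$). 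The key point is that every component of $\F$ is subnormal in $\F_{S\cap E(G)}(E(G))$: indeed $\C$ centralizes $C_S(\F)$ and more to the point lies inside $\F_{S\cap E(G)}(E(G))$ since the components of $\F$ must sit inside any normal subsystem containing all components — here I would invoke that $E(G)$ contains $F^*(G)$-type data, or more carefully argue that a component of $\F$ cannot have its ``support'' outside $S \cap E(G)$. Granting $\C \trianglelefteq\trianglelefteq \F_{S\cap E(G)}(E(G))$, the components of a central product are the components of the factors, so $\C$ is a component of some $\K_i = \F_{S\cap K_i}(K_i)$; since $\C$ is quasisimple and each $\K_i$ is either constrained (hence has no components) or has a unique component $O^{p'}(\K_i)$, we must be in case (2) and $\C = O^{p'}(\K_i) = O^{p'}(\F_{S\cap K_i}(K_i))$. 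This is the $K = K_i$ we want.

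For \textbf{uniqueness}, suppose $O^{p'}(\F_{S\cap K}(K)) = \C = O^{p'}(\F_{S\cap K'}(K'))$ for components $K \neq K'$ of $G$. Then $K$ and $K'$ commute (distinct components of $G$ commute), so $T_K := S \cap K$ and $T_{K'} := S \cap K'$ commute in $S$ and $\F_{T_K}(K)$, $\F_{T_{K'}}(K')$ commute in $\F$. On the other hand $\C \leq \F_{T_K}(K)$ and $\C \leq \F_{T_{K'}}(K')$ forces $T = \mathrm{supp}$ of $\C$ to lie in $T_K \cap T_{K'} \leq Z(\F_{T_K}(K))$ by the central-product/commuting structure — so $\C$ would be centralized by $T_K$. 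But $\C = O^{p'}(\F_{T_K}(K))$ is quasisimple, in particular $\C = O^p(\C)$, and a quasisimple fusion system has trivial center as a fusion system only in a controlled way; more usefully, $\C$ is not abelian, so $T$ is nonabelian and $T \leq Z(\F_{T_K}(K))$ is absurd unless $T \cap T_{K'} < T$. To turn this into a genuine contradiction I would use the $Z_p^*$-theorem: if $\C$ were centralized by all of $T_{K'}$ we would contradict that $\C$ is a nontrivial component (a component is not centralized by a nontrivial $p$-group containing ``enough'' of its Sylow group), after reducing modulo $O_{p'}$ and applying \cite{HenkeSemeraro2015} exactly as in the proof of \cite[Lemma~2.54]{HenkeLynd2022}. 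Hence $K = K'$.

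The \textbf{main obstacle} is the uniqueness part, specifically making precise that two distinct components $K, K'$ of $G$ with the same associated fusion component $\C$ lead to a contradiction: one must carefully track how $\C$ sits inside the central product $\F_{T_K}(K) * \F_{T_{K'}}(K')$ and show that commuting of the ambient systems forces $\C$ to be centralized by a $p$-subgroup that it cannot be centralized by. This is where the $Z_p^*$-theorem (for odd $p$) and Theorem~\ref{fuscomp} both enter, following the template of \cite[Lemma~2.54]{HenkeLynd2022} closely; the extra subtlety over the $p=2$ case handled there is that $O^{p'}(\F_{T_K}(K))$ need not equal $\F_{T_K}(K)$, so one must consistently pass to $O^{p'}$ and use that $O^{p'}$ is unaffected by the $p'$-part of the central factor.
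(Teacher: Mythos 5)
Your overall strategy matches the paper's (pass to the fusion system of the layer, decompose it as a central product of the $\K_i$, and apply Theorem~\ref{fuscomp}), but the step you explicitly flag with ``Granting $\C \subn \F_{S\cap E(G)}(E(G))$'' is a genuine gap, and it is exactly the point of the whole proof. Your proposed justification --- that ``the components of $\F$ must sit inside any normal subsystem containing all components'' --- is circular: the assertion to be proved is precisely that this particular normal subsystem contains $\C$. The paper closes this gap as follows: set $\E = \F_{S\cap F^*(G)}(F^*(G))$ (note: $F^*(G)$, not $E(G)$); if $\C$ were not a component of $\E$, then $\C$ and $\E$ commute by Lemma~\ref{L:ComponentsBasic}, so $T \leq C_S(\E)$; and by \cite[Theorem~B]{HenkeSemeraro2015} --- the fusion-theoretic form of the $Z_p^*$-theorem, valid since $O_{p'}(G)=1$ --- one has $C_S(\E) = C_S(F^*(G)) = Z(F^*(G))$, forcing $T$ abelian and contradicting quasisimplicity of $\C$ via \cite[(9.1.2)]{AschbacherGeneralized}. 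Your choice of $E(G)$ in place of $F^*(G)$ makes this unrepairable as stated: $C_S(E(G))$ need not be abelian (consider $E(G)=1$), so the self-centralizing property of the generalized Fitting subgroup is essential here.

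Relatedly, you have misplaced where the $Z_p^*$-theorem is needed. It enters in the existence step just described, not in uniqueness. Uniqueness is elementary: if $K \neq K'$ are components of $G$ with $T \leq (S\cap K)\cap(S\cap K')$, then $[K,K']=1$ gives $[T,T]=1$, so $T$ is abelian, again contradicting \cite[(9.1.2)]{AschbacherGeneralized}; no reduction modulo $O_{p'}$ or appeal to \cite{HenkeSemeraro2015} is required. Two smaller points: the claim that $O^{p'}(\K_i)$ is the \emph{unique} component of $\K_i$ in case (2) of Theorem~\ref{fuscomp} needs the short argument the paper gives (a second component would commute with a component supported on all of $S\cap K_i$ and hence be abelian), and one should also note that $\F_{S\cap K}(K)$ is not constrained (by \cite[(9.9.1)]{AschbacherGeneralized}) before concluding that case (2) applies.
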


\begin{proof}
Let $T$ be the Sylow group of $\C$ and set $\E = \F_{S \cap F^*(G)}(F^*(G))$. 
Since $F^*(G)$ is normal in $G$, the system $\E$ is normal in $\F$ by \cite[Proposition~I.6.2]{AschbacherKessarOliver2011}. 

\smallskip

Assume first that $\C$ is not a component of $\E$. Then $\E$ and $\C$ commute by Lemma~\ref{L:ComponentsBasic} and so $T \leq C_S(\E)$. 
But $C_S(\E) = C_S(F^*(G)) = Z(F^*(G))$ by \cite[Theorem~B]{HenkeSemeraro2015}, 
and so $T$ is abelian. By \cite[(9.1.2)]{AschbacherGeneralized} this contradicts that $\C$ is quasisimple.

\smallskip

Therefore $\C$ is a component of $\E$.
As $\E$ is the central product of $\F_{O_p(G)}(O_p(G))$ and the subsystems $\F_{S \cap K}(K)$ where $K$ runs over the components of $G$, it follows from \cite[Lemma~2.15]{HenkeLynd2022} that $\C$ is a component of $\F_{S\cap K}(K)$ for some component $K$ of $G$. The fusion system $\F_{S\cap K}(K)$ of such a component $K$ is not constrained by \cite[(9.9.1)]{AschbacherGeneralized} (or by \cite[Lemma~7.13(c)]{ChermakHenke2022}). Hence, $O^{p^\prime}(\F_{S\cap K}(K))$ is quasisimple by Theorem~\ref{fuscomp} and thus a component of $\F_{S\cap K}(K)$. As $T\leq S\cap K$ is non-abelian and any two different components of $\F_{S\cap K}(K)$ commute by Lemma~\ref{L:ComponentsBasic}, it follows that the two components $\C$ and $O^{p^\prime}(\F_{S\cap K}(K))$ must be equal.
\end{proof}

The following theorem, which is essentially a restatement of \cite[Theorem~A]{BrotoMollerOliverRuiz2023}, is needed both in the proof of Theorem~\ref{main} and in the proof of Proposition~\ref{P:main}.

\begin{theorem}\label{T:BMORSubnormal}
Suppose there exists a realizable subnormal subsystem $\E$ of $\F$ which contains every component of $\F$. Then $\F$ is realizable.
\end{theorem}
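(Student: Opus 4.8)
The plan is to reduce Theorem~\ref{T:BMORSubnormal} to \cite[Theorem~A]{BrotoMollerOliverRuiz2023}, which is exactly the same statement with ``subnormal'' replaced by ``normal''. So the whole content of the proof is a descent along a subnormal series. Write $\E = \E_0 \norm \E_1 \norm \cdots \norm \E_n = \F$, where each $\E_j$ is a normal subsystem of $\E_{j+1}$ (saturated, on some subgroup $S_j \leq S$). I would argue by downward induction on $j$ that $\E_j$ is realizable and contains every component of $\E_j$; the case $j = 0$ is the hypothesis, and the case $j = n$ is the desired conclusion.

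For the inductive step, suppose $\E_j$ is realizable and contains every component of $\E_j$. First I would check that $\E_j$ contains every component of $\E_{j+1}$. Let $\C$ be a component of $\E_{j+1}$. By the general theory of components (as in \cite[Chapter~9]{AschbacherGeneralized}, or \cite[Lemma~7.15]{ChermakHenke2022}), a component of $\E_{j+1}$ is either a component of the normal subsystem $\E_j$ or else it commutes with $\E_j$. In the first case $\C$ is a component of $\E_j$, hence contained in $\E_j$ by hypothesis. In the second case $\C$ commutes with $\E_j$, so the Sylow group $T$ of $\C$ lies in $C_S(\E_j)$; but then $\C \subseteq C_\F(T)$ forces $T$ to be abelian (it centralizes itself), contradicting quasisimplicity of $\C$ via \cite[(9.1.2)]{AschbacherGeneralized}. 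Hence $\C$ is always a component of $\E_j$ and lies in $\E_j$, and since this holds for all components, $\E_j$ contains every component of $\E_{j+1}$. Now apply \cite[Theorem~A]{BrotoMollerOliverRuiz2023} to the pair $\E_j \norm \E_{j+1}$: $\E_j$ is a realizable \emph{normal} subsystem of the saturated system $\E_{j+1}$ containing every component of $\E_{j+1}$, so $\E_{j+1}$ is realizable. Finally, $\E_{j+1}$ contains every one of its own components trivially, which restores both parts of the inductive hypothesis at level $j+1$.

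The main obstacle I anticipate is purely bookkeeping: making sure the notion of ``component'' behaves correctly along the subnormal series and that the second-case argument above is airtight. In particular one must be careful that ``$\E_j$ contains every component of $\E_j$'' is genuinely part of the induction hypothesis and not something that needs separate justification, and that the distinction between a component of $\E_{j+1}$ being ``a component of $\E_j$'' versus merely ``contained in $\E_j$'' does not cause trouble — here it does not, because a subnormal subsystem of $\E_j$ that is quasisimple is automatically a component of $\E_j$, and conversely. One small point worth stating explicitly: since $\E$ is saturated and subnormal in $\F$, all intermediate $\E_j$ are saturated (normal subsystems are saturated by definition, and \cite{AschbacherKessarOliver2011} gives that normality is transitive in the relevant sense), so \cite[Theorem~A]{BrotoMollerOliverRuiz2023} applies at each stage. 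No appeal to the classification beyond what already enters \cite[Theorem~A]{BrotoMollerOliverRuiz2023} is needed.
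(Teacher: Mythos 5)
There is a genuine gap in the inductive step, and it sits exactly where your argument ought to be using the hypothesis that $\E$ contains every component of $\F$ --- a hypothesis your induction never invokes (your base case ``$\E_0$ contains every component of $\E_0$'' is a tautology). If your inductive step were correct, it would prove that any saturated fusion system with a realizable subnormal subsystem is realizable, which is false: for $p=2$ take $\F=\C\times\F_T(T)$ with $\C$ an exotic quasisimple system (e.g.\ Benson--Solomon) and $T$ any $2$-group; then $\F_T(T)$ is a realizable normal subsystem of $\F$ with no components, but $\F$ is exotic. The precise error is in your ``second case''. If a component $\C$ of $\E_{j+1}$ commutes with $\E_j$, then by definition $\C\subseteq C_\F(S_j)$ and $\E_j\subseteq C_\F(T)$, where $S_j$ is the Sylow group of $\E_j$ and $T$ that of $\C$. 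The latter gives $T\leq C_S(\E_j)$, as you say, but it does \emph{not} give $\C\subseteq C_\F(T)$: that would require $T\leq S_j$, which need not hold since $\C$ is a component of $\E_{j+1}$, not of $\E_j$. So $T$ need not centralize itself, no contradiction arises, and the second case genuinely occurs. (Compare Lemma~\ref{L:CentralProdSubnormal} in the paper, where the analogous abelianness argument works only because there $\C_j$ is assumed to be a component of $\E_i$, forcing $T_j\leq T_i$; and Theorem~\ref{T:Realizable}, where $C_S(\E)=Z(F^*(G))$ is abelian for a different, group-theoretic reason.)

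The repair is the paper's argument, and it is where the main hypothesis enters: a component of $\E_{j+1}$ is subnormal in $\E_{j+1}$, hence subnormal in $\F$ by transitivity, hence a quasisimple subnormal subsystem of $\F$, i.e.\ a component of $\F$, and therefore contained in $\E=\E_0\subseteq\E_j$ by hypothesis. With that replacement (and dropping the superfluous second clause of your induction hypothesis), your reduction to \cite[Theorem~A]{BrotoMollerOliverRuiz2023} along the subnormal series is exactly the proof given in the paper.
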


\begin{proof}
Consider a subnormal series $\E=\E_0\unlhd \E_1\unlhd\cdots\unlhd \E_n=\F$ of $\E$ in $\F$. Fix $i\in\{1,2,\dots,n\}$. Note that every component of $\E_i$ is subnormal in $\F$, therefore a component in $\F$, and thus contained in $\E=\E_0\subseteq \E_{i-1}$. Hence, $\E_{i-1}$ is a normal subsystem of $\E_i$ containing every component of $\E_i$. If $\E_{i-1}$ is realizable, it follows therefore from \cite[Theorem~A]{BrotoMollerOliverRuiz2023} that $\E_i$ is realizable. As we assume that $\E=\E_0$ is realizable, induction on $i$ yields now that $\E_i$ is realizable for $i=0,1,2,\dots,n$. In particular, $\F=\E_n$ is realizable as required.   
\end{proof}

The next lemma is a crucial ingredient in the proof of Theorem~\ref{main}. The reader might want to recall Lemma~\ref{L:CentralProduct}.

\begin{lemma}\label{L:CentralProdSubnormal}
 Let $\E_1,\dots,\E_k$ be subnormal subsystems of $\F$ and set $\C_i:=O^{p^\prime}(\E_i)$. Suppose $\C_1,\dots,\C_k$ are pairwise distinct components of $\F$. Then $\E_1,\dots,\E_k$ commute in $\F$ and the central product $\E_1*\E_2*\cdots *\E_k$ is a subnormal subsystem of $\F$.
\end{lemma}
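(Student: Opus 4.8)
The plan is to prove Lemma~\ref{L:CentralProdSubnormal} by induction on $k$, reducing at each stage to the case $k=2$ of combining two subnormal subsystems whose $O^{p'}$-cores are distinct components. First I would observe that the fact that $\E_1,\dots,\E_k$ commute in $\F$ is nearly immediate from Lemma~\ref{L:ComponentsBasic}: since $\C_i=O^{p'}(\E_i)$ is a component of $\F$ and the $\C_i$ are pairwise distinct, any two of them commute; one then needs to upgrade this to the statement that the $\E_i$ themselves commute. The point is that a subnormal subsystem $\E_i$ with $O^{p'}(\E_i)=\C_i$ can differ from $\C_i$ only in its "$p$-part'', so one should show $\E_i\subseteq \C_i\cdot C_\F(\C_i)$ (a central product of $\C_i$ with a subsystem centralizing it), which forces $\E_i$ to centralize every $\C_j$ with $j\neq i$, and then also to centralize $\E_j$ for $j\neq i$. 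For this last step I would use Lemma~\ref{L:ComponentsBasic} applied to the subnormal subsystem $\E_i$ and the collection of components $\{\C_j : j\neq i,\ \C_j\text{ not a component of }\E_i\}$; since $\C_j$ ($j \ne i$) cannot be a component of $\E_i$ (its only component is $\C_i$, as $O^{p'}(\E_i)=\C_i$ and components are subnormal and quasisimple, hence equal to the $O^{p'}$-core when there is just one), we get that $\E_i$ commutes with each such $\C_j$, and symmetrically $\E_j$ commutes with $\C_i$; combining these one deduces $\E_i\subseteq C_\F(S_j)$ where $S_j$ is the Sylow group of $\E_j$.

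Once commuting is established, Lemma~\ref{L:CentralProduct} gives the central product $\D:=\E_1*\cdots*\E_k$ as a well-defined saturated subsystem of $\F$, and it remains to prove $\D$ is subnormal in $\F$. Here I would induct: assuming $\D':=\E_1*\cdots*\E_{k-1}$ is subnormal in $\F$ (with the same hypotheses, since its $O^{p'}$-core is the central product $\C_1*\cdots*\C_{k-1}$ and the $\C_i$ are still the relevant components), I reduce to showing that if $\D'$ and $\E$ are subnormal in $\F$, commute, and no component of one is a component of the other, then $\D'*\E$ is subnormal. The natural tool is to take subnormal series $\D'=\D'_0\unlhd\cdots\unlhd\D'_m=\F$ and $\E=\E_0\unlhd\cdots\unlhd\E_n=\F$ and argue that the subsystems $\D'_j*\E$ (or some interleaving of the two series after forming central products) give a subnormal series from $\D'*\E$ up to $\F$. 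One wants: if $\A\unlhd\B$ are subnormal in $\F$ and $\E$ is subnormal in $\F$ commuting appropriately with both, then $\A*\E\unlhd\B*\E$. This should follow from the behaviour of central products under normality — concretely, if $\A\unlhd\B$ and $\E$ normalizes $\A$ in the appropriate sense and commutes with $\B$, then $\A*\E\unlhd\B*\E$ — which I would extract from the central product machinery in \cite[Section~2]{ChermakHenke2022}, possibly from the results cited just before Lemma~\ref{L:CentralProduct} (Lemmas~2.16, 2.19, 2.22 there) together with \cite[Lemma~7.15]{ChermakHenke2022}.

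I expect the main obstacle to be the subnormality statement, specifically verifying that forming a central product with a fixed subnormal subsystem $\E$ preserves the normality relation $\A\unlhd\B$ along a subnormal series, and keeping track of the commuting hypotheses at each stage of the interleaved series so that all the central products in sight are defined. A secondary technical point is ensuring that at each step of the outer induction the collection $\C_1,\dots,\C_{k-1}$ together with $O^{p'}$ of the remaining $\E_j$'s still satisfies the hypothesis "pairwise distinct components of $\F$'' relative to the relevant ambient subsystem — but since all these are components of $\F$ itself and $\F$ is fixed, this is really just a matter of careful bookkeeping rather than a genuine difficulty. If the clean statement "$\A\unlhd\B$, $\E$ subnormal, suitable commuting $\Rightarrow\A*\E\unlhd\B*\E$'' is not directly available in the literature, the fallback is to prove it by hand using the description of normal subsystems via the surjectivity and invariance conditions, applied to the explicit model of the central product as a quotient of a direct product.
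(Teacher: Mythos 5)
Your argument for the commuting statement is essentially the paper's: one checks that for $j\neq i$ the component $\C_j$ of $\F$ is not a component of $\E_i$, and then Lemma~\ref{L:ComponentsBasic}, applied to the subnormal subsystem $\E_i$ and the components $\C_j$ with $j\neq i$, gives $\E_i\subseteq C_\F(\prod_{j\neq i}T_j)$, where $T_j$ is the Sylow group of $\C_j$. Since $O^{p'}$ does not change the underlying $p$-group, $T_j$ is also the Sylow group of $\E_j$, so this is already the definition of the $\E_i$ commuting; no further ``combining'' and no containment of the form $\E_i\subseteq\C_i*C_\F(\C_i)$ is needed. Do, however, make your parenthetical claim that $\C_i$ is the only component of $\E_i$ precise: the paper argues that $\C_i=O^{p'}(\E_i)$ is itself a quasisimple normal subsystem, hence a component, of $\E_i$, so if $\C_j$ were another component of $\E_i$ then by Lemma~\ref{L:ComponentsBasic} either $\C_j=\C_i$ or the two commute, and in the latter case the Sylow group of $\C_j$, being contained in $T_i$ and centralizing $T_i$, would be abelian, contradicting \cite[(9.1.2)]{AschbacherGeneralized}.

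The subnormality part of your proposal has a genuine gap. The interleaving strategy breaks down immediately: in a subnormal series $\D'=\D'_0\unlhd\D'_1\unlhd\cdots\unlhd\D'_m=\F$ the higher terms $\D'_j$ need not commute with the remaining factor $\E_k$ at all (the top term is $\F$ itself), so the central products $\D'_j*\E_k$ are not even defined, and the results you cite from \cite{ChermakHenke2022} do not supply a step of the form ``$\mathcal{A}\unlhd\mathcal{B}$ plus suitable commuting implies $\mathcal{A}*\E_k\unlhd\mathcal{B}*\E_k$'' that could be propagated along the series. This is not mere bookkeeping: already for groups, the assertion that the product of two commuting subnormal subgroups is subnormal is an instance of Wielandt's join theorem, and the naive series-interleaving argument fails there for the same reason. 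The paper's proof instead invokes the fusion-system analogue \cite[Theorem~5.4(a),(b)]{HenkeWielandt}: there is a subnormal subsystem $\E=\la\E_1,\dots,\E_k\ra$ of $\F$ that is the smallest saturated subsystem of $\F$ in which every $\E_i$ is subnormal. Each $\E_i$ is normal, hence subnormal, in the central product $\D=\E_1*\cdots*\E_k$ by \cite[2.19(c), 2.21]{ChermakHenke2022}, so minimality forces $\E\subseteq\D$, while $\D$ is constructed inside $\E$; thus $\D=\E$ is subnormal in $\F$. Without this join theorem, or an equivalent substitute, your argument does not close.
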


\begin{proof}
For $i=1,2,\dots,k$ let $T_i\leq S$ such that $\E_i$ (and thus $\C_i$) is a fusion system over $T_i$. We show first for all $i,j\in\{1,2,\dots,k\}$ the following property.
 \begin{equation}\label{E:i=j}
  \mbox{If $\C_j$ is a component of $\E_i$, then $i=j$.}
 \end{equation}
For the proof assume that $\C_j$ is a component of $\E_i$. Note that $\C_i=O^{p^\prime}(\E_i)$ is a quasisimple normal subsystem of $\E_i$ and thus a component of $\E_i$. Hence, it follows from Lemma~\ref{L:ComponentsBasic} that either $\C_i$ and $\C_j$ commute or $\C_i=\C_j$. In the first case, we have in particular $[T_i,T_j]=1$ and so $T_j\leq T_i$ is abelian, which by \cite[(9.1.2)]{AschbacherGeneralized} contradicts $\C_j$ being quasisimple. So $\C_i=\C_j$ and our assumption yields $i=j$.

\smallskip

To prove the lemma we use now that, by \cite[Theorem~5.4(a),(b)]{HenkeWielandt} there exists a subnormal subsystem $\E=\la\E_1,\dots,\E_k\ra$ which is (with respect to inclusion) the smallest saturated subsystem of $\F$ in which each $\E_i$ is subnormal. Note that $\C_i=O^{p^\prime}(\E_i)$ is then also subnormal in $\E$ and thus a component of $\E$ for each $i=1,2,\dots,k$. It follows thus from Lemma~\ref{L:ComponentsBasic} and \eqref{E:i=j} that, for each $i=1,2,\dots,k$, the subsystems $\E_i,\C_1,\dots,\C_{i-1},\C_{i+1},\dots,\C_k$ commute in $\E$. This means $\E_i\subseteq C_\E(\prod_{j\neq i}T_j)$ for $i=1,2,\dots,k$ and so $\E_1,\dots,\E_k$ commute in $\E$. In particular, $\E_1,\dots,\E_k$ commute in $\F$ proving the first part of the assertion. 

\smallskip

It follows now from Lemma~\ref{L:CentralProduct} that $\E$ contains a unique saturated subsystem 
\[\D:=\E_1*\E_2*\cdots *\E_k\]
which is a central product of $\E_1,\dots,\E_k$. Note that $\D$ is (again according to Lemma~\ref{L:CentralProduct}) also the unique saturated subsystem of $\F$ which is a central product of $\E_1,\dots,\E_k$. By \cite[2.19(c), 2.21]{ChermakHenke2022}, $\E_i$ is normal in $\D$ and thus subnormal in $\D$. Since $\E$ is the smallest saturated subsystem of $\F$ in which each $\E_i$ is subnormal, it follows that $\D=\E$ is subnormal in $\F$. 
\end{proof}

\begin{proof}[Proof of Theorem~\ref{main}]
As in the theorem write $\C_1,\dots,\C_k$ for the pairwise distinct components of $\F$.

\smallskip

If $\F=\F_S(G)$ is realized by a finite group $G$ with Sylow $p$-subgroup $S$, then, identifying $S$ with its image in $G/O_{p^\prime}(G)$, we have $\F=\F_S(G)=\F_S(G/O_{p^\prime}(G))$. Hence, replacing $G$ by $G/O_{p^\prime}(G)$, we may assume $O_{p^\prime}(G)=1$. Moreover, it follows from  \cite[Proposition~I.6.2]{AschbacherKessarOliver2011} that, for every subnormal subgroup $K$ of $G$, the fusion system $\F_{S\cap K}(K)$ is subnormal in $\F$. Thus, if $\F$ is realizable, then the existence of realizable subnormal subsystems $\E_i$ with $O^{p^\prime}(\E_i)=\C_i$ for $i=1,2,\dots,k$ follows from Theorem~\ref{T:Realizable}.

\smallskip

To prove the converse direction assume now that we are given realizable subnormal subsystems $\E_1,\dots,\E_k$ of $\F$ such that $\C_i=O^{p^\prime}(\E_i)$ for $i=1,2,\dots,k$. By Lemma~\ref{L:CentralProdSubnormal}, the central product $\D:=\E_1*\E_2*\cdots *\E_k$ is subnormal in $\F$. Let $S_i\leq S$ such that $\C_i$ and $\E_i$ are fusion systems over $S_i$ for $i=1,2,\dots,k$.

\smallskip 

Recall that the components $\C_1,\dots,\C_k$ of $\F$ are all contained in $\D$. Thus, by Theorem~\ref{T:BMORSubnormal}, it is sufficient to argue that $\D$ is realizable. For $i=1,2,\dots,k$ choose a finite group $G_i$ with Sylow $p$-subgroup $S_i$ such that $\E_i=\F_{S_i}(G_i)$. It is an easy exercise to check that the direct product $G:=G_1\times G_2\times \cdots \times G_k$ of groups realizes the direct product $\E:=\E_1\times \E_2\times \cdots \times \E_k$ of fusion systems. Moreover, setting $Z_i:=Z(\E_i)$, it follows from \cite[2.19(a), 2.21]{ChermakHenke2022} that $\alpha\colon S_1\times S_2\times\cdots \times S_k\rightarrow S$ is a group homomorphism which induces an epimorphism $\E\rightarrow \D$ with $\ker(\alpha)\leq Z_1\times Z_2\times \cdots \times Z_k$. 
By \cite[Proposition~I.5.4]{AschbacherKessarOliver2011}, we have $\E_i=\F_{S_i}(C_{G_i}(Z_i))$. Thus, replacing $G_i$ by $C_{G_i}(Z_i)$ we may assume $Z_i\leq Z(G_i)$ and thus 
\[Z:=\ker(\alpha)\leq Z_1\times Z_2\times \cdots \times Z_k\leq Z(G).\]
Now \cite[Example~II.5.6]{AschbacherKessarOliver2011} (or an easy direct argument) yields that $\E/Z$ is realizable by $G/Z$ and so $\D\cong \E/Z$ is also realizable by a finite group as required. 
\end{proof}

Note that Proposition~\ref{P:main} is implied by the following proposition. The proof of the last sentence in the proposition depends on the classification of finite simple groups through \cite[Theorem~A]{BrotoMollerOliverRuiz2023}.

\begin{proposition}\label{P:uniqueE}
Let $\C$ be a subnormal subsystem of $\F$ on $T$ with $\C=O^{p^\prime}(\C)$ and set 
\[\mathfrak{S}:=\{\D\subn\F\colon O^{p^\prime}(\D)=\C\}.\] 
Then there exists a unique largest (with respect to inclusion) member $\E$ of $\mathfrak{S}$, and every subsystem in $\mathfrak{S}$ is subnormal in $\E$. Moreover, if some member of $\mathfrak{S}$ is realizable, then $\E$ is also realizable. 
\end{proposition}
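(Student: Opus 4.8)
The plan is to realize $\E$ as the Wielandt join of all members of $\mathfrak{S}$, to check that this join again lies in $\mathfrak{S}$, and then to deduce realizability from Theorem~\ref{T:BMORSubnormal}. First I would note that $\mathfrak{S}$ is non-empty, since $O^{p'}(\C)=\C$ gives $\C\in\mathfrak{S}$, and that every $\D\in\mathfrak{S}$ satisfies $\C=O^{p'}(\D)\norm\D$, so that $\C\subseteq\D$, the subsystem $\D$ is a fusion system over $T$ (because $O^{p'}(\D)$ has the same Sylow group as $\D$), and $\D/\C$ is a group of order prime to $p$.

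Next I would set $\E:=\la\D:\D\in\mathfrak{S}\ra$, the smallest saturated subsystem of $\F$ in which every member of $\mathfrak{S}$ is subnormal; by \cite[Theorem~5.4]{HenkeWielandt} this subsystem exists, is subnormal in $\F$, and has every $\D\in\mathfrak{S}$ as a subnormal subsystem. The one substantial point is to show that $\E$ is again a fusion system over $T$, i.e.\ that the Wielandt join of subsystems that are all over $T$ and subnormal in $\F$ is itself over $T$; I expect this to follow from the description of the join in \cite{HenkeWielandt}. Granting it, $\C$ becomes a subnormal subsystem of $\E$ whose underlying group $T$ is a Sylow group of $\E$ and which satisfies $O^{p'}(\C)=\C$, and from the fusion-system version of the elementary fact that a subnormal subgroup of a finite group which contains a Sylow $p$-subgroup and equals its own $O^{p'}$ is normal (such $H$ is normal because $H=O^{p'}(H)=O^{p'}(H^{G})$, which is characteristic in $H^{G}\norm G$) I would conclude $\C\norm\E$. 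Since $\C$ is then a normal subsystem of $\E$ over a Sylow group of $\E$, the quotient $\E/\C$ is a group of order prime to $p$, whence $O^{p'}(\E)\subseteq\C$; and $\C=O^{p'}(\C)\subseteq O^{p'}(\E)$ because $\C\subseteq\E$. Thus $O^{p'}(\E)=\C$ and $\E\in\mathfrak{S}$. As $\E$ contains every member of $\mathfrak{S}$ and lies in $\mathfrak{S}$, it is the unique largest member, and every member of $\mathfrak{S}$ is subnormal in $\E$ by construction.

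For the realizability statement I would assume that some $\D'\in\mathfrak{S}$ is realizable. Then $\D'$ is subnormal in $\E$, so by Theorem~\ref{T:BMORSubnormal} (applied with $\E$ in place of $\F$) it suffices to check that $\D'$ contains every component of $\E$. But any component $\K$ of $\E$ is quasisimple and subnormal in $\F$, hence a component of $\F$; since $\K$ is quasisimple it satisfies $O^{p'}(\K)=\K$, and since $\K$ is subnormal in $\E$ it therefore lies in $O^{p'}(\E)=\C\subseteq\D'$. Hence $\D'$ is a realizable subnormal subsystem of $\E$ containing all components of $\E$, and Theorem~\ref{T:BMORSubnormal} gives that $\E$ is realizable.

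The main obstacle is the assertion that $\E=\la\D:\D\in\mathfrak{S}\ra$ is a fusion system over $T$, i.e.\ the stability of ``being defined over $T$'' under the Wielandt join of subsystems subnormal in $\F$; once that is in hand, the remaining steps are routine manipulations with $O^{p'}$, subnormality, and Theorem~\ref{T:BMORSubnormal}, using standard facts about fusion systems from \cite{HenkeWielandt}, \cite{ChermakHenke2022}, and \cite{AschbacherKessarOliver2011}.
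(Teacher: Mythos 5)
Your proposal is correct and follows essentially the same route as the paper: form the Wielandt join $\E$ of the members of $\mathfrak{S}$ via \cite[Theorem~5.4]{HenkeWielandt} (which also asserts that $\E$ is over $T$, resolving the point you flag as the main obstacle), show $O^{p'}(\E)=\C$ so that $\E\in\mathfrak{S}$, and apply Theorem~\ref{T:BMORSubnormal}. The only divergences are in the bookkeeping: the paper obtains $O^{p'}(\E)=\C$ by iterating \cite[Theorem~7.56]{CravenTheory} along a subnormal series from $\C$ to $\E$ rather than first proving $\C\norm\E$, and it cites \cite[Theorem~7.10(d)]{ChermakHenke2022} for the fact that every component of $\E$ lies in $O^{p'}(\E)\subseteq\D'$ --- the two ``fusion-system versions of elementary group facts'' you invoke without proof are precisely these cited theorems.
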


\begin{proof}
Note that $\mathfrak{S}$ is non-empty as $\C\in\mathfrak{S}$. Observe also that each member of $\mathfrak{S}$ is a subsystem over $T$. Hence, by \cite[Theorem~5.4(a),(b)]{HenkeWielandt}, there is a saturated subsystem $\E$ of $\F$ over $T$ that is minimal under inclusion subject to containing each member of $\mathfrak{S}$ as a subnormal subsystem, and $\E$ is subnormal in $\F$. 

\smallskip

Since $\C\in\mathfrak{S}$ is subnormal in $\E$, we may consider a subnormal series $\C = \C_0 \norm \C_1 \norm \cdots \norm \C_n = \E$. As $\C_0,\C_1,\dots,\C_n$ are all subsystems on $T$, it follows from \cite[Theorem~7.56]{CravenTheory} that $O^{p'}(\C_{i}) \subseteq \C_{i-1}$ for each $i = 1,\dots,n$, and so $O^{p'}(\C_{i-1}) = O^{p'}(\C_{i})$. This implies inductively that $O^{p'}(\E) = \C$. Hence, $\E\in\mathfrak{S}$ and so $\E$ is a maximal member of $\mathfrak{S}$.

\smallskip

Suppose now that some member $\E'$ of $\mathfrak{S}$ is realizable. As $\C=O^{p^\prime}(\E)\subseteq \E'$, it follows from \cite[Theorem~7.10(d)]{ChermakHenke2022} that $\E'$ contains every component of $\E$. Hence, $\E$ is realizable by Theorem~\ref{T:BMORSubnormal}.
\end{proof}

\section{Examples}\label{S:Example}

One direction of Theorem~\ref{main} states that a saturated fusion system $\F$ with components $\C_1,\dots,\C_r$ is realizable if, for each $i=1,2,\dots,r$, there exists a realizable subnormal subsystem $\E_i$ of $\F$ with $O^{p^\prime}(\E_i)=\C_i$. It is natural to ask whether one could drop the assumption here that $\E_i$ is subnormal and just assume that $\E_i$ is a realizable subsystem of $\F$ with $O^{p^\prime}(\E_i)=\C_i$. We give now some examples to show that this is not the case. Indeed, for each integer $r\geq 1$, we construct a counterexample with precisely $r$ components. 

\smallskip

If $C$ is a group or a fusion system and $r\geq 1$ an integer, then we write $C^r=C\times C\times \cdots \times C$ for the $r$-fold direct product of $C$ with itself. If $S$ is a group, $A\leq \Aut(S)$ and $U\leq S$, then we set $A|_U:=\{\beta|_U\colon \beta\in A\}$. 

\begin{example}
 Let $p\geq 5$ be a prime and $H$ a finite simple group such that, for a Sylow $p$-subgroup $T$ of $H$, the fusion system $\C:=O^{p^\prime}(\F_T(H))$ is an exotic simple fusion system.  Thus, $H$ is one of the groups listed in \cite[Theorem~A(c)]{OliverRuiz2021}. Fix furthermore an integer $r\geq 2$. Let $S:=T^r$ so that $\E:=\C^r$ is a fusion system over $S$. Define
 \[A:=\{(\alpha,\alpha,\dots,\alpha)\colon \alpha\in \Aut_H(T)\}\leq \Aut(S).\]
 Let $\iota_i\colon T\rightarrow S$ be the $i$th inclusion map. Set 
 \[T_i:=T\iota_i,\;\C_i:=\C^{\iota_i}\mbox{ and }\E_i:=\<\C_i,A|_{T_i}\>\mbox{ for }i=1,2,\dots,r.\]
 Then the following hold.
 \begin{itemize}
 \item[(a)] The fusion system $\F:=\<\E,A\>$ is saturated and exotic. It has components $\C_1,\C_2,\dots,\C_r$, and each component is isomorphic to $\C$. For each $i=1,2,\dots,r$, the fusion system $\E_i$ is a saturated subsystem of $\F$ with $O^{p^\prime}(\E_i)=\C_i$. Moreover, $\E_i$ is isomorphic to $\F_T(H)$ and thus realizable.
 \item[(b)] Set $R:=T_2T_3\cdots T_r$ and $\G=N_\F(R)$. Then $\G$ is saturated and exotic. Its only component is $\C_1$. Moreover, $\E_1$ is a realizable subsystem of $\G$ with  $O^{p^\prime}(\E_1)=\C_1$.  
 \end{itemize}
\end{example}

\begin{proof}
\textbf{(a)} It is shown in \cite[Theorem~I.6.6]{AschbacherKessarOliver2011} that the direct product of two saturated fusion systems is saturated. As this result can easily be generalized to direct products with arbitrarily many factors, it follows that the fusion system $\E$ is saturated. Observe that every automorphism in $A$ is a fusion-preserving automorphism of $\E$.
Moreover, $A\cong \Aut_H(T)$ and 
\[\pi:=A\Aut_\E(S)/\Aut_\E(S)\cong A/(A\cap \Aut_\E(S))\cong \Aut_H(T)/\Aut_\C(T)\]
is a $p^\prime$-group. It follows thus from \cite[Theorem~5.7(a)]{BCGLO2007} applied with $\E$ in place of $\F$ that $\F$ is saturated. Since the condition (b) in \cite[Proposition~I.6.4]{AschbacherKessarOliver2011} is satisfied, it follows from that proposition that $\E$ is $\F$-invariant (in the sense defined \cite[Definition~I.6.1]{AschbacherKessarOliver2011}). As $\E$ is a saturated subsystem of $\F$ over $S$, this implies that $\E$ is normal in $\F$.

\smallskip

By assumption and construction, $\C_i\cong \C$ is a simple fusion system for $i=1,2,\dots,r$. Moreover, $\C_i\unlhd \E\unlhd \F$ is subnormal in $\F$. Hence, $\C_1,\C_2,\dots,\C_r$ are components of $\F$. By \cite[(9.1.2)]{AschbacherGeneralized}, every component of $\F$ is a fusion system over a non-abelian subgroup of $S$. As $S=T_1T_2\cdots T_r$, it follows thus from Lemma~\ref{L:ComponentsBasic} that $\C_1,\C_2,\cdots,\C_r$ are the only components of $\F$.

\smallskip

For $\alpha\in \Aut_H(T)$ and $i=1,2,\dots,r$, we have $(\alpha,\alpha,\dots,\alpha)|_{T_i}=\alpha^{\iota_i}$. Thus, $A|_{T_i}=\Aut_H(T)^{\iota_i}$. As $\F_S(H)=\<\C,\Aut_H(T)\>$, it follows that $\E_i=\F_S(H)^{\iota_i}\cong \F_S(H)$. In particular, $\E_i$ is realizable and $O^{p^\prime}(\E_i)=\C^{\iota_i}=\C_i$. 

\smallskip

To prove (a), it remains to show that $\F$ is exotic. Note that $R:=T_2T_3\cdots T_r$ is normal in $S$ and thus fully $\F$-normalized. Therefore, it is a consequence of \cite[Proposition~I.5.4]{AschbacherKessarOliver2011} that $\G:=N_\F(R)$ is realizable if $\F$ is realizable. Hence, it is sufficient to prove that $\G$ is exotic, which we will do below in the proof of (b).

\smallskip

\textbf{(b)} As observed before, $R\unlhd S$ is fully $\F$-normalized. Hence, $\G=N_\F(R)$ is saturated by \cite[Theorem~I.5.5]{AschbacherKessarOliver2011}. As $\E\unlhd \F$, it follows from \cite[Lemma~2.13]{ChermakHenke2022} that  $N_\E(R)\unlhd N_\F(R)=\G$. Using the construction of $\E$, one observes furthermore that $\C_1\unlhd N_\E(R)$. Hence, $\C_1$ is subnormal in $\G$. As $\C_1\cong \C$ is simple, it follows that $\C_1$ is a component of $\G$. 

\smallskip

Notice now that $\F_R(R)\unlhd \G$. If there were a component $\K$ of $\G$ with $\K\neq \C_1$, then $\K$ would thus commute with $\C_1$ and $\F_R(R)$ by Lemma~\ref{L:ComponentsBasic}. As $S=T_1R$, this would however mean that $\K$ is a subsystem over an abelian subgroup of $S$, contradicting \cite[(9.1.2)]{AschbacherGeneralized}.  Hence, $\C_1$ is the only component of $\G$. Observe that $\E_1$ is a subsystem of $\G$. As seen in (a), $\E_1$ is realizable and $O^{p^\prime}(\E_1)=\C_1$. 

\smallskip

It remains now to show that $\G$ is exotic. Assume that this is false, i.e. $\G=\F_S(G)$ for some finite group $G$ containing $S$ as a Sylow $p$-subgroup. As $\G=N_\G(R)$ and $R\unlhd S$, it follows from \cite[Proposition~I.5.4]{AschbacherKessarOliver2011} that $\G=\F_S(N_G(R))$. Thus, replacing  $G$ by $N_G(R)$, we may assume that $R\unlhd G$. Moreover, replacing $G$ by $G/O_{p^\prime}(G)$, we may assume $O_{p^\prime}(G)=1$. Then by Theorem~\ref{T:Realizable}, there exists a component $K$ of $G$ with $K\cap S=T_1$ and $O^{p^\prime}(\F_{T_1}(K))=\C_1$. 

\smallskip

As $\C_1\cong \C$ is exotic, we have $\C_1\neq \F_{T_1}(K)$. Hence, there exists $\beta\in\Aut_K(T_1)\backslash\Aut_{\C_1}(T_1)$.  As $R\unlhd G$, we have $[K,R]=1$. Thus, $\beta$ extends to a $\hat{\beta}\in\Aut_\G(S)$ such that $\hat{\beta}$ restricts to the identity on $R$. Observe that $\Aut_\F(S)=\Aut_\E(S)A$. Hence, it follows from the Frattini condition for $\F$-invariant subsystems (cf. \cite[Definition~I.6.1]{AschbacherKessarOliver2011}) that $\hat{\beta}=\phi\tilde{\alpha}$ for some $\phi\in\Aut_\E(S)$ and some $\tilde{\alpha}\in A$. Write $\phi=(\phi_1,\dots,\phi_r)$ and $\tilde{\alpha}=(\alpha,\dots,\alpha)$ where $\phi_i\in\Aut_{\C}(T)$ for $i=1,2,\dots,r$ and $\alpha\in \Aut_H(T)$. Then 
\[\hat{\beta}=(\phi_1\alpha,\phi_2\alpha,\dots,\phi_r\alpha).\]
As $\hat{\beta}$ restricts to the identity on $T_2$, we have $\phi_2\alpha=\id_T$ and hence $\alpha=\phi_2^{-1}\in\Aut_\C(T)$. Thus, $\phi_1\alpha\in\Aut_\C(T)$ and $\beta=\hat{\beta}|_{T_1}=(\phi_1\alpha)^{\iota_1}\in\Aut_\C(T)^{\iota_1}=\Aut_{\C_1}(T_1)$, contradicting the choice of $\beta$. This shows that $\G$ is exotic and completes the proof.  
\end{proof}

\bibliographystyle{amsalpha}
\bibliography{mybib.bib}
\end{document}